\documentclass{article}
\usepackage{amsfonts}
\usepackage{amssymb}
\usepackage{amsmath}
\usepackage{amsthm}
\usepackage{graphicx}

\usepackage{color} 
   \definecolor{cites}{rgb}{0.50 , 0.00 , 0.00}  
   \definecolor{urls} {rgb}{0.00 , 0.00 , 0.50}  
   \definecolor{links}{rgb}{0.00 , 0.00 , 0.50}   

\usepackage{hyperref} 
\hypersetup{  
      colorlinks=true,   
      citecolor=cites,   
      urlcolor=urls,     
      linkcolor=links,   
      pdftitle={Circulant matrices: norm, powers, and positivity},  
      pdfauthor={Marko Lindner}, 
      pdfstartview=FitH,       
      bookmarksopen=false      
}

\parindent3ex
\parskip1ex

\topmargin-5mm
\textheight220mm
\oddsidemargin7mm
\evensidemargin0mm
\textwidth150mm

\newcommand\C{{\mathbb C}}
\newcommand\cC{{\mathcal C}}

\newcommand\F{{\bf F}}

\newcommand\N{{\mathbb N}}

\newcommand\T{{\mathbb T}}
\newcommand\R{{\mathbb R}}

\newcommand\Z{{\mathbb Z}}

\newcommand\ph{{\varphi}}
\renewcommand\th{{\vartheta}}

\newcommand\diag{\mathop{\rm diag}}

\renewcommand\Re{{\rm Re}}

\newcommand\bfx{{\bf x}}
\newcommand\bfy{{\bf y}}

\newcommand\bfO{{\bf 0}}
\newcommand\Ax{{\bf A}_\bfx}
\newcommand\Bx{{\bf B}_\bfx}
\newcommand\BxT{{\Bx^\top}}
\newcommand\Cx{{\bf C}_\bfx}
\newcommand\Cy{{\bf C}_\bfy}
\newcommand\CxT{{\Cx^\top}}
\newcommand\Cxadj{{\Cx^*}}
\newcommand\Dx{{\bf D}_\bfx}
\newcommand\Dxadj{{\Dx^*}}
\newcommand\bfD{{\bf D}}
\newcommand\bfM{{\bf M}}
\newcommand\bfN{{\bf N}}
\newcommand\bfR{{\bf R}}

\newtheorem{theorem}{Theorem}[section]
\newtheorem{lemma}[theorem]{Lemma}
\newtheorem{corollary}[theorem]{Corollary}

\newenvironment{example}
 {\par\noindent\refstepcounter{theorem}{\bf Example \thetheorem}}
 {\raisebox{1mm}{\framebox{}}\pagebreak[2]}




\numberwithin{figure}{section}  

\begin{document}
\title{\bf Circulant matrices: norm, powers, and positivity}
\author{
{\sc Marko Lindner}\footnote{Techn. Univ. Hamburg (TUHH), Institut Mathematik, D-21073 Hamburg, Germany, \url{lindner@tuhh.de}}
}

\date{\today}
\maketitle
\begin{quote}
~\\[-2em]
\footnotesize {\sc Abstract.}
In their recent paper ``The spectral norm of a Horadam circulant matrix'', 
Merikoski, Haukkanen, Mattila and Tossavainen
study under which conditions the spectral norm of a general real circulant matrix ${\bf C}$
equals the modulus of its row/column sum. We improve on their sufficient condition
until we have a necessary one. Our results connect the above problem to positivity of sufficiently high 
powers of the matrix ${\bf C^\top C}$. We then generalize the result to complex circulant matrices.
\end{quote}

\noindent
{\it Mathematics subject classification (2010):} 15A60; Secondary 15B05, 15B48.\\
{\it Keywords and phrases:} spectral norm, circulant matrix, eventually positive semigroups
\section{Introduction and Preliminaries}
For $n\in\N$ and $\bfx=(x_0,\dots,x_{n-1})\in\R^n$, look at the 
circulant matrix
\[
\Cx\ :=\ \begin{pmatrix}
     x_0&x_1&\cdots&x_{n-1}\\
     x_{n-1}&x_0&\ddots &\vdots\\[0mm]
     \vdots&\ddots&\ddots      &x_1\\
     x_1&\cdots&x_{n-1}&x_0
    \end{pmatrix}\ \in\ \R^{n\times n}.
\]
Motivated by studies of so-called Horadam or Fibonacci circulant matrices, 
the authors of \cite{4Fins1,4Fins2} ask in \cite{4Fins1} under which conditions 
the spectral norm of $\Cx$  equals $|x_0+x_1+\dots+x_{n-1}|$. 
We give a sufficient and a necessary condition.
Both have to do with the positivity of powers of $\CxT\Cx$.

If $\bfR:={\bf C}_{(0,1,0,\dots,0)}$ denotes the cyclic backward shift
$\bfR:(u_1,\dots,u_n)\mapsto(u_2,\dots,u_n,u_1)$, then\\[-2mm]
\[
\Cx=x_0\bfR^0+x_1\bfR^1+\dots+x_{n-1}\bfR^{n-1}=c(\bfR)
\quad\text{with}\quad
c(t) := x_0t^0+x_1t^1+\dots+x_{n-1}t^{n-1}.
\]
The polynomial $c$ is called the {\sl symbol} of $\Cx$.
Most of the time, we understand $c$ as a function on
\[
\T_n\ :=\ \{t\in\C:t^n=1\}=\{\omega^0,\omega^1,\dots,\omega^{n-1}\}
\qquad\text{with}\qquad
\omega:=\exp(\tfrac{2\pi}n{\sf i}).
\]
It is easy to see that $\bfR$ diagonalizes 
as $\bfR=\F\bfD\F^*$, where $\bfD=\diag(\omega^0,\dots,\omega^{n-1})$
and $\F$ is the so-called {\sl Fourier matrix} $\tfrac1{\sqrt n}\bigl(\omega^{jk}\bigr)_{j,k=0}^{n-1}$. 
Note that $\F$ is unitary, so that $\F^{-1}=\F^*$. 
Consequently,
\[
\Cx=c(\bfR)=c(\F\bfD\F^*)=\F\, c(\bfD)\,\F^*=\F\diag(c(\omega^0),\dots,c(\omega^{n-1}))\F^*
=\F\Dx\F^*
\]
with $\Dx:=\diag(c(\omega^0),\dots,c(\omega^{n-1}))$.
Since $\F$ is an isometry of $\C^n$ with the Euclidean norm,\\[-2mm]
\begin{equation} \label{eq:diag}
\|\Cx\|\ =\ \|\F\Dx\F^*\|\ =\ \|\Dx\|\ =\ \max\bigl(|c(\omega^0)|,|c(\omega^1)|,\dots,|c(\omega^{n-1})|\bigr)\ =:\ \|c\|_\infty,
\end{equation}
where $\|\cdot\|$ denotes the spectral norm of a matrix; it is the matrix
norm that is induced by the Euclidean norm.
Of course, all of this is standard \cite{Davis:Circ}. The Fourier transform $\F$ turns the
convolution $\Cx$ into a multiplication $\Dx$. We are just fixing notations here.

The question of \cite{4Fins1} is essentially, under which conditions
\begin{equation} \label{eq:claim}
 \|\Cx\|=\|c\|_\infty
 \qquad\text{equals}\qquad
 |x_0+x_1+\dots+x_{n-1}|=|c(1)|=|c(\omega^0)|.
\end{equation}
So let
\[
\cC_n\ :=\ \bigl\{\bfx=(x_0,\dots,x_{n-1})\in\R^n\ :\ \|\Cx\|=|x_0+x_1+\dots+x_{n-1}|\bigr\}.
\]
%
%
Looking at \eqref{eq:claim}, we see that 
\[
\bfx\in\cC_n \quad\iff\quad\|c\|_\infty=|c(1)|,\quad
\text{i.e.~$|c(\cdot)|$ assumes its maximum on $\T_n$ at $t=1=\omega^0$.}
\]
We will work with the latter condition in what follows.
We will also study the following subset of $\cC_n$ if $n\ge 2$. Let
\[
\cC_n'\ :=\ \bigl\{\bfx\in\cC_n\ :\ \max_{t\in\T_n\setminus\{1\}}|c(t)|<|c(1)|=\|c\|_\infty\bigr\}\ \subset\ \cC_n.
\]
While, for $\bfx\in\cC_n$, the maximum of $|c(\cdot)|$ in $\T_n$ is attained at $t=1$,
for $\bfx\in\cC_n'$ it is {\bf only} attained at $t=1$, so that $\Cx$ has a spectral gap 
between the two largest (in modulus) eigenvalues.
We start with a simple sufficient
condition for membership in $\cC_n$ and $\cC_n'$, respectively. Here we write
$\bfx\ge\bfO$ ($\bfx>\bfO$) or $\bfM\ge\bfO$ ($\bfM>\bfO$) if each entry of, 
respectively, the vector $\bfx$ or the matrix $\bfM$ is nonnegative (positive). 
\begin{lemma} \label{lem:x>=0} 
Let $n\ge 2$ and $\bfx\in\R^n$.

{\bf a) } If $\bfx\ge\bfO$ or $-\bfx\ge\bfO$ (i.e. $\pm\Cx\ge\bfO$)
then $\bfx\in\cC_n$. (This is \cite[Corollary 2]{4Fins1}.)

{\bf b) } If $\bfx>\bfO$ or $-\bfx>\bfO$ (i.e. $\pm\Cx>\bfO$)
then $\bfx\in\cC_n'$.
\end{lemma}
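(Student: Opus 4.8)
The plan is to use the reformulations already established in the text, namely that $\bfx\in\cC_n$ is equivalent to $\|c\|_\infty=|c(1)|$ and that $\bfx\in\cC_n'$ is equivalent to $\max_{t\in\T_n\setminus\{1\}}|c(t)|<|c(1)|=\|c\|_\infty$, and to control $|c(t)|$ for $t\in\T_n$ by the triangle inequality. Since every $t\in\T_n$ satisfies $|t|=1$,
\[
|c(t)|\ =\ \Bigl|\sum_{j=0}^{n-1}x_jt^j\Bigr|\ \le\ \sum_{j=0}^{n-1}|x_j|\,|t|^j\ =\ \sum_{j=0}^{n-1}|x_j|,\qquad t\in\T_n .
\]

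For part \textbf{a)}: if $\bfx\ge\bfO$ the right-hand side equals $x_0+\dots+x_{n-1}=c(1)=|c(1)|$, and if $-\bfx\ge\bfO$ it equals $-(x_0+\dots+x_{n-1})=-c(1)=|c(1)|$; in either case $\|c\|_\infty\le|c(1)|$. The reverse inequality is automatic because $1=\omega^0\in\T_n$, so $\|c\|_\infty=|c(1)|$ and hence $\bfx\in\cC_n$.

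For part \textbf{b)}: I would inspect when the triangle-inequality step above becomes an equality, which happens precisely when all nonzero summands $x_jt^j$ lie on a single ray emanating from $0$. If $\pm\bfx>\bfO$, then every $x_j$ is nonzero with a fixed sign, so in particular the $j=0$ summand $x_0$ is a nonzero real number; the $j=1$ summand $x_1t$ must then be a positive multiple of $x_0$, which (as $x_1$ has the same sign as $x_0$ and $|t|=1$) forces $t=1$. Consequently, for every $t\in\T_n\setminus\{1\}$ the inequality is strict, $|c(t)|<\sum_j|x_j|=|c(1)|=\|c\|_\infty$, which is exactly the condition defining $\cC_n'$. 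The hypothesis $n\ge2$ enters here only to ensure that the index $j=1$ occurs and that $\T_n\setminus\{1\}\neq\emptyset$.

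The argument is short, and the only place that asks for a little care is the characterization of equality in the complex triangle inequality, together with the routine bookkeeping of the two sign cases $\pm\bfx\ge\bfO$ (resp. $\pm\bfx>\bfO$); beyond that I do not anticipate any genuine obstacle.
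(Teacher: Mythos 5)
Your proposal is correct and follows essentially the same route as the paper: part a) is the identical triangle-inequality bound, and part b) isolates the terms $x_0$ and $x_1t$ to see that they cannot be aligned unless $t=1$ (the paper phrases this as $|x_0+x_1t|<|x_0|+|x_1t|$ for $t\ne1$, you phrase it via the equality case of the triangle inequality — the same idea). No gaps.
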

\begin{proof}
{\bf a) }
By triangle inequality, every $|c(t)|$ with $t\in\T_n$ is bounded as follows
\[
\left|c(t)\right|\ =\ \left|x_0+x_1t^1+\dots+x_{n-1}t^{n-1}\right|
 \ \le\ |x_0|+|x_1|+\dots+|x_{n-1}|
 \quad\text{since}\quad
 |t|=1.
\]
But this upper bound, and hence the maximum $\|c\|_\infty$, is attained by $|c(1)|=|x_0+\dots+x_{n-1}|$
as soon as all $x_k$ have the same sign, $\bfx\ge\bfO$ or $-\bfx\ge\bfO$. 

{\bf b) } The statement can be derived by the Perron-Frobenius theorem but here is
a more elementary proof.
Let $\bfx>\bfO$. (The argument is similar for $-\bfx>\bfO$.)
 By {\bf a)}, we have $|c(1)|=\|c\|_\infty$. 
For every $t\in\T_n\setminus\{1\}$, it holds $|x_0+x_1t|<|x_0|+|x_1t|$
since $x_0,x_1>0$ and $1$ and $t$ have different directions in $\C$.
Consequently, noting that $|t|=1$,
\begin{align*}
|c(t)|\ &=\ \left|x_0+x_1t^1+\dots+x_{n-1}t^{n-1}\right|
\ \le\ \underbrace{|x_0+x_1t|}_{<\ |x_0|+|x_1t|}+|x_2t^2|+\dots+|x_{n-1}t^{n-1}|\\
& <\ |x_0|+|x_1|+|x_2|+\dots+|x_{n-1}|
\ =\ x_0+\dots+x_{n-1}
\ =\ c(1)\ =\ |c(1)|\ =\ \|c\|_\infty.\qedhere
\end{align*}
\end{proof}
This sufficient condition for membership in $\cC_n$ or $\cC_n'$ seems quite generous.
\cite{4Fins1} suggests the following improvement. Put
\begin{equation} \label{eq:Bx}
\Bx\ :=\ \CxT\Cx\ =\ \Cxadj\Cx\ =\ (\F\Dx\F^*\bigr)\kern-0.1em^*(\F\Dx\F^*)\ =\ \F\Dxadj\Dx\F^*
\ =\ \F\Ax\F^*
\end{equation}
with $\Ax:=\Dxadj\Dx=\diag(b(\omega^0),\dots,b(\omega^{n-1}))$, where 
\[
b(t) := \overline{c(t)}c(t)=|c(t)|^2
\quad\text{for all}\quad t\in\T_n,
\quad\text{so that}\quad
\|b\|_\infty:=\max_{t\in\T_n}|b(t)|=\max_{t\in\T_n}|c(t)|^2=\|c\|_\infty^2.
\]
Then $\Bx$ is again a real circulant matrix. Applying Lemma \ref{lem:x>=0} to $\Bx$ (in place of $\Cx$), we get:
\pagebreak
\begin{lemma} \label{lem:Bx>=0} 
Let $n\ge2$, $\bfx\in\R^n$ and put $\Bx:=\CxT\Cx$.

{\bf a) } If $\Bx\ge\bfO$ then $\bfx\in\cC_n$. (This is \cite[Theorem 4]{4Fins1}.)\\[-1.3em]

{\bf b) } If $\Bx>\bfO$ then $\bfx\in\cC_n'$. 
\end{lemma}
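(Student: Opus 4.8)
The plan is to apply Lemma~\ref{lem:x>=0} to $\Bx$ in the role of $\Cx$ and then pull the resulting statement about the symbol $b=|c|^2$ back to $c$ by taking square roots.

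First I would make explicit what \eqref{eq:Bx} already records: $\Bx$ is itself a real circulant matrix, so $\Bx={\bf C}_\bfy$ for the vector $\bfy\in\R^n$ given by its first row, and, again by \eqref{eq:Bx}, the symbol of $\Bx$ is (the polynomial that agrees on $\T_n$ with) $b(t)=|c(t)|^2$. Since a circulant matrix has nonnegative (resp. positive) entries exactly when its defining vector does, the hypothesis $\Bx\ge\bfO$ (resp. $\Bx>\bfO$) is the same as $\bfy\ge\bfO$ (resp. $\bfy>\bfO$). Then, as recorded right after \eqref{eq:diag}, $\bfy\in\cC_n$ is equivalent to $\|b\|_\infty=|b(1)|$, and $\bfy\in\cC_n'$ to $\max_{t\in\T_n\setminus\{1\}}|b(t)|<|b(1)|=\|b\|_\infty$.

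For a), I would invoke Lemma~\ref{lem:x>=0}\,a) with $\bfy\ge\bfO$ to get $\|b\|_\infty=|b(1)|$, and then use $\|b\|_\infty=\|c\|_\infty^2$ together with $|b(1)|=b(1)=|c(1)|^2$ (note $b\ge0$ on $\T_n$) to conclude $\|c\|_\infty^2=|c(1)|^2$, hence $\|c\|_\infty=|c(1)|$, i.e. $\bfx\in\cC_n$. For b) the argument is verbatim the same with $\ge$ replaced by $>$: Lemma~\ref{lem:x>=0}\,b) applied to $\bfy>\bfO$ gives $\max_{t\in\T_n\setminus\{1\}}|b(t)|<|b(1)|$, and squaring down yields $\max_{t\in\T_n\setminus\{1\}}|c(t)|<|c(1)|=\|c\|_\infty$, i.e. $\bfx\in\cC_n'$.

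There is essentially no obstacle here; the only point worth a sentence is the passage from $b$ to $c$, where one uses that $b\ge0$ on $\T_n$ so that $|b(1)|=b(1)$, and that $r\mapsto\sqrt r$ is strictly increasing on $[0,\infty)$, which is precisely what preserves the equality in a) and the strict inequality in b). One need not worry about the ``$-\bfy\ge\bfO$'' branch of Lemma~\ref{lem:x>=0}: the diagonal entry of $\Bx$ equals $\|\bfx\|_2^2$, so that branch only occurs in the trivial case $\bfx=\bfO$ — and in any case only the nonnegative branch is used.
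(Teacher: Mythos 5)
Your proposal is correct and follows essentially the same route as the paper: apply Lemma~\ref{lem:x>=0} to the circulant $\Bx$ with symbol $b=|c|^2$ and take square roots, which preserves the equality in a) and the strict inequality in b). Your closing remark about the $\|\bfx\|_2^2$ diagonal entry ruling out the $-\bfy\ge\bfO$ branch matches the paper's own note after the lemma.
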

\begin{proof}
Recall that the symbol $b$ of $\Bx$ is related to the symbol $c$ of $\Cx$ by $b(t)=|c(t)|^2$
for all $t\in\T_n$. So $b$ assumes its maximum at the same point(s) as $|c(\cdot)|$ does.
For {\bf a)}, by Lemma \ref{lem:x>=0} a),
\[
\Bx\ge\bfO\quad\Rightarrow\quad
\|b\|_\infty=|b(1)|\quad\Rightarrow\quad
\|c\|_\infty^2=|c(1)|^2\quad\Rightarrow\quad
\|c\|_\infty=|c(1)|\quad\Rightarrow\quad
\bfx\in\cC_n.
\]
{\bf b) } By Lemma \ref{lem:x>=0} b), positivity $\Bx>\bfO$ implies that $|b(t)|<\|b\|_\infty$
for all $t\in\T_n\setminus\{1\}$. But then also $|c(t)|=|b(t)|^{1/2}<\|b\|_\infty^{1/2}=\|c\|_\infty$
for all $t\in\T_n\setminus\{1\}$. So $\bfx\in\cC_n'$.
\end{proof}

Note that the case $-\Bx\ge\bfO$ is impossible (unless $\bfx=\bfO$, in which case $\Bx=\bfO$) since the main diagonal
of $\Bx$ carries the entry $\|\bfx\|_2^2$.
\section{Iterating the argument until sufficient becomes necessary}
Looking at Lemmas \ref{lem:x>=0} and \ref{lem:Bx>=0}, the following questions seem natural:\\[0.5em]
%
\begin{tabular}{l}
(Q1)~~Is the new condition $\CxT\Cx\ge\bfO$ substantially weaker than the old condition $\pm\Cx\ge\bfO$?\\[0.1em]
(Q2)~~Do we get a chain of increasingly weaker sufficient conditions if we repeat the argument?\\[0.1em]
(Q3)~~Does that chain end in a necessary condition?
\end{tabular}

Let us address those questions, starting with (Q1):
It is easy to see that for $n\in\{1,2\}$, the two conditions are equivalent but for $n\ge3$ they differ.
Figure \ref{fig:table} below indicates that the quotient of their probabilities grows as $n$ grows.
As an example for $n=3$, look at $\bfx=(1,-2,-3)$, where
\[
 \Cx=\begin{pmatrix}
      1&-2&-3\\
      -3&1&-2\\
      -2&-3&1
     \end{pmatrix}\not\ge\bfO,
\qquad    
-\Cx\not\ge\bfO
\qquad\text{but}\qquad
\Bx:=\CxT\Cx=\begin{pmatrix}
                14&1&1\\
                1&14&1\\
                1&1&14
               \end{pmatrix}\ge\bfO.
\]
So Lemma \ref{lem:x>=0} is not strong enough to show $\bfx\in\cC_3$, 
i.e. $\|\Cx\|=|1-2-3|=4$, but 
Lemma \ref{lem:Bx>=0} is.

About (Q2): With $\Bx=\CxT\Cx$, let us now look at $\BxT\Bx$. 
But since $\BxT=\Bx$, one has $\BxT\Bx=\Bx^2$.
This is still a circulant, to which we can apply Lemma \ref{lem:x>=0}.
Then one can again multiply $\Bx^2$ with its transpose (itself) or just with $\Bx$ and continue like that.
\begin{theorem} \label{thm:sufficient}
Let $n\ge2$, $\bfx\in\R^n$ and $\Bx=\CxT\Cx$.

{\bf a) } If $\Bx^m\ge\bfO$ for some $m\in\N$ then $\bfx\in\cC_n$.\\[-1.3em]

{\bf b) } If $\Bx^m>\bfO$ for some $m\in\N$ then $\bfx\in\cC_n'$. 
\end{theorem}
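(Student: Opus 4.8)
The plan is to notice that $\Bx^m$ is again a real circulant matrix and then to apply Lemma~\ref{lem:x>=0} to it, exactly as Lemma~\ref{lem:Bx>=0} applied Lemma~\ref{lem:x>=0} to $\Bx=\Bx^1$. First I would read off the symbol of $\Bx^m$ on $\T_n$: from \eqref{eq:Bx} and the unitarity of $\F$ one gets $\F^*\Bx^m\F=\Ax^m=\diag(b(\omega^0)^m,\dots,b(\omega^{n-1})^m)$ with $b(t)=|c(t)|^2\ge0$. So $\Bx^m$ is a real circulant, say $\Bx^m=\Cy$ with $\bfy\in\R^n$, and since the $j$-th diagonal entry of $\F^*\Cy\F$ is both the $j$-th symbol value of $\Cy$ (by the diagonalization recorded in the preliminaries, where $\F^*\Cx\F=\Dx$) and $b(\omega^j)^m$, the symbol of $\Cy$ equals $b(\cdot)^m=|c(\cdot)|^{2m}$ on $\T_n$ — a nonnegative function, so it coincides there with its own modulus.

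Next I would feed this into Lemma~\ref{lem:x>=0}, with $\Cy$ in place of $\Cx$ and $m\ge1$. For part a): if $\Bx^m=\Cy\ge\bfO$, then $\bfy\in\cC_n$, i.e.\ the modulus of the symbol of $\Cy$ attains its maximum over $\T_n$ at $t=1$; thus $\max_{t\in\T_n}b(t)^m=b(1)^m$, and since $s\mapsto s^m$ is strictly increasing on $[0,\infty)$ this yields $\|b\|_\infty=|b(1)|$, whence $\|c\|_\infty=|c(1)|$, i.e.\ $\bfx\in\cC_n$, exactly as in the proof of Lemma~\ref{lem:Bx>=0}~a). For part b): if $\Bx^m=\Cy>\bfO$, then $\bfy\in\cC_n'$, i.e.\ $b(t)^m<b(1)^m$ for every $t\in\T_n\setminus\{1\}$; the same monotonicity gives $b(t)<b(1)$, hence $|c(t)|<|c(1)|=\|c\|_\infty$ for all such $t$, so $\bfx\in\cC_n'$. (One could instead phrase this as an induction on $m$ with Lemma~\ref{lem:Bx>=0} as base case, but the direct symbol computation is shorter.)

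I do not expect a genuine obstacle: morally the statement is just Lemma~\ref{lem:Bx>=0} with $\Bx$ replaced by $\Bx^m$. The two points deserving a word of care are that a power of a circulant is again a circulant whose symbol is the corresponding pointwise power — immediate from $\Cx=\F\Dx\F^*$ and the entrywise multiplication of diagonal matrices — and that $s\mapsto s^m$ is injective and order-preserving on $[0,\infty)$, so that both the location of the maximum and the strict inequalities transfer from $b^m$ back to $b$ (and hence to $|c|$). It is also worth recording that $m\ge1$ is essential: for $m=0$ one has $\Bx^0\ge\bfO$ for every $\bfx$ while the conclusion fails in general; the case $\bfx=\bfO$ is of course trivial.
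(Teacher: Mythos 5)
Your proof is correct and follows essentially the same route as the paper: identify $\Bx^m$ as a real circulant with symbol $b(\cdot)^m=|c(\cdot)|^{2m}$ via $\Bx^m=\F\Ax^m\F^*$, then apply Lemma~\ref{lem:x>=0} and transfer the location of the maximum (and the strict inequalities) back to $|c(\cdot)|$ exactly as in Lemma~\ref{lem:Bx>=0}. The extra remarks on the monotonicity of $s\mapsto s^m$ and on $m\ge1$ are correct but only make explicit what the paper leaves implicit.
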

\begin{proof}
For every $m\in\N$, we have, by \eqref{eq:Bx},
\begin{equation} \label{eq:Bx^m}
\Bx^m = \F\,\Ax^m\,\F^* = \F\,\diag_{k=0}^{n-1}b(\omega^k)^m\ \F^*,
\quad\text{ so that}\quad
\|\Bx^m\| = \max_{k=0}^{n-1}|b(\omega^k)|^m = \|b\|_\infty^m = \|c\|_\infty^{2m}.
\end{equation}
So $\Bx^m$ is a circulant matrix with symbol $t\mapsto b(t)^m=|c(t)|^{2m}$. 
It assumes its maximum at the same point(s) of $\T_n$ as $|c(\cdot)|$ does.
Now argue as in the proof of Lemma \ref{lem:Bx>=0}.
\end{proof}

Looking at $m=2^0,\; 2^1,\; 2^2, \dots$ and noting that $\bfM,\bfN\ge\bfO$ implies $\bfM\cdot\bfN\ge\bfO$, 
we get that
\[
\begin{array}{ccccccccccccc}
\pm\Cx\ge 0
&\Rightarrow& \Bx\ge{\bf 0}
& \Rightarrow& \Bx^2\ge {\bfO}
& \Rightarrow& \Bx^4\ge {\bfO}
& \Rightarrow& \Bx^8\ge {\bfO}
& \Rightarrow& \cdots
& \Rightarrow& \bfx\in\cC_n,\\[0.2em]
\pm\Cx> 0
&\Rightarrow& \Bx>{\bfO}
& \Rightarrow& \Bx^2> {\bfO}
& \Rightarrow& \Bx^4>{\bfO}
& \Rightarrow& \Bx^8> {\bfO}
& \Rightarrow& \cdots
& \Rightarrow& \bfx\in\cC_n'.
\end{array}
\]
To illustrate that these are indeed chains of increasingly weaker conditions,
let us approximately compute\footnote{using a Monte Carlo simulation with one million 
equally distributed points in the unit ball} 
the portion of the unit ball in $\R^n$ that satisfies the corresponding condition:
\[
{\footnotesize
\begin{array}{|c||r|r|r|r|r|r|r|c||r|}
\hline
\rule{0pt}{1.1em}n&\pm\bfx>\bfO&\Bx>\bfO&\Bx^2>\bfO&\Bx^4>\bfO&\Bx^8>\bfO&\Bx^{16}>\bfO&\Bx^{32}>\bfO&\cdots&\bfx\in\cC_n'\\
\hline
\rule{0pt}{1.1em}
n=2&  50.0\% &  50.0\%  & 50.0\%  &  50.0\%  &  50.0\%  & 50.0\% &  50.0\%  &\cdots& 50.0\%\\
n=3&  25.0\% &  42.3\%  & 42.3\%  &  42.3\%  &  42.3\%  & 42.3\% &  42.3\%  &\cdots& 42.3\%\\
n=4&  12.5\% &  25.0\%  & 27.3\%  &  28.9\%  &  29.8\%  & 30.3\% &  30.5\%  &\cdots& 30.8\%\\
n=5&   6.3\% &  23.2\%  & 25.4\%  &  27.1\%  &  28.1\%  & 28.6\% &  28.9\%  &\cdots& 29.2\%\\
n=6&   3.1\% &  16.7\%  & 20.0\%  &  21.9\%  &  22.8\%  & 23.1\% &  23.3\%  &\cdots& 23.5\%\\
n=7&   1.6\% &  14.7\%  & 18.1\%  &  20.4\%  &  21.7\%  & 22.4\% &  22.8\%  &\cdots& 23.2\%\\
n=8&   0.8\% &  10.4\%  & 14.3\%  &  16.8\%  &  18.1\%  & 18.8\% &  19.2\%  &\cdots& 19.5\%\\
n=9&   0.4\% &  10.3\%  & 14.4\%  &  17.0\%  &  18.3\%  & 18.9\% &  19.2\%  &\cdots& 19.5\%\\
n=10&  0.2\% &   7.5\%  & 11.6\%  &  14.3\%  &  15.7\%  & 16.3\% &  16.6\%  &\cdots& 16.9\%\\
\vdots& \multicolumn{1}{|c|}{\vdots}&\multicolumn{1}{|c|}{\vdots} &\multicolumn{1}{|c|}{\vdots} &\multicolumn{1}{|c|}{\vdots} &\multicolumn{1}{|c|}{\vdots} & \multicolumn{1}{|c|}{\vdots} &\multicolumn{1}{|c|}{\vdots} &         &\multicolumn{1}{|c|}{\vdots}\\
n=20& 2^{-19}&  1.9\%  &  5.2\%  &    7.9\%  &   9.4\%  &  10.1\%  & 10.4\%  &\cdots& 10.7\%\\
\hline
\end{array}
}
\]
~\\[-3.3em]
\begin{figure}[h]
\caption{An approximate computation of the portion of points $\bfx\in\R^n$ of the unit ball 
(note that all conditions are invariant under scaling of $\bfx$) that satisfy the 
corresponding condition in the header. Reading from left to right, every row seems to grow -- in the limit -- 
up to the portion of the ball that belongs to $\cC_n'$.
This is a positive sign with respect to our question (Q3).}
\label{fig:table}
\end{figure}
~\\[-2em]

Finally, we turn to our question (Q3) about necessary conditions for membership in $\cC_n$ or $\cC_n'$.
Nonnegativity / positivity of powers of $\Bx$ is not necessary for membership in $\cC_n$
(see Example \ref{ex:CnvsCn'} below). But, assuming a spectral gap, i.e.~membership in $\cC_n'$, 
we get convergence of the power method and hence positivity of large powers of $\Bx$ (due to the special
structure of the corresponding eigenvector).
\begin{theorem} \label{thm:necessary}
If $\bfx\in\cC_n'$ 
then there exists an $m_0\in\N$ such that $\Bx^m>\bfO$ for all $m\ge m_0$.
\end{theorem}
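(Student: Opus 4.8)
The plan is to use the spectral decomposition \eqref{eq:Bx^m}, which says $\Bx^m=\F\Ax^m\F^*$ with $\Ax^m=\diag_k b(\omega^k)^m$ and $b(\omega^k)=|c(\omega^k)|^2\ge 0$. The hypothesis $\bfx\in\cC_n'$ means $b(1)=\|c\|_\infty^2$ is the \emph{strictly} largest of the values $b(\omega^0),\dots,b(\omega^{n-1})$, all of which are nonnegative reals. Writing $\lambda:=b(1)$, we therefore have $b(\omega^k)/\lambda\in[0,1)$ for $k\ne 0$, so $(b(\omega^k)/\lambda)^m\to 0$ as $m\to\infty$. Hence $\lambda^{-m}\Ax^m\to\diag(1,0,\dots,0)=:\bfE$, and consequently $\lambda^{-m}\Bx^m=\F(\lambda^{-m}\Ax^m)\F^*\to\F\bfE\F^*$.

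The second step is to identify the limit $\F\bfE\F^*$ explicitly. Since $\bfE=e_0e_0^\top$ where $e_0$ is the first standard basis vector, $\F\bfE\F^*=(\F e_0)(\F e_0)^*$, and $\F e_0$ is the first column of the Fourier matrix, namely $\tfrac1{\sqrt n}(1,1,\dots,1)^\top$. Therefore $\F\bfE\F^*=\tfrac1n J$, where $J$ is the all-ones $n\times n$ matrix. This is where the ``special structure of the corresponding eigenvector'' mentioned in the text comes in: the Perron eigenvector of $\Bx$ associated with its simple top eigenvalue is the constant vector, because $t=1$ is the distinguished point of $\T_n$, so the rank-one limit is a \emph{positive} matrix, strictly bounded away from $\bfO$ entrywise.

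The final step is to convert the convergence $\lambda^{-m}\Bx^m\to\tfrac1n J>\bfO$ into entrywise positivity for all large $m$. Since $\tfrac1n J$ has all entries equal to $\tfrac1n>0$, and there are only finitely many ($n^2$) matrix entries, convergence gives an $m_0$ such that every entry of $\lambda^{-m}\Bx^m$ exceeds, say, $\tfrac1{2n}>0$ for all $m\ge m_0$; multiplying by $\lambda^m>0$ (note $\lambda=\|c\|_\infty^2>0$ since $\bfx\ne\bfO$, as $\cC_n'$ requires a strict inequality and hence a nonzero vector) yields $\Bx^m>\bfO$ for all $m\ge m_0$, as claimed.

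I do not expect any serious obstacle here: the argument is essentially the power method for a diagonalizable matrix with a simple dominant eigenvalue, and everything is made transparent by the Fourier diagonalization already set up in \eqref{eq:Bx} and \eqref{eq:Bx^m}. The only points requiring a word of care are (i) noting that all $b(\omega^k)$ are genuinely nonnegative real numbers, so that ``largest in modulus'' and ``largest as a real number'' coincide and no rotation/phase issue arises in the limit, and (ii) checking $\lambda>0$ so that the rescaling is legitimate — both of which are immediate from the definitions.
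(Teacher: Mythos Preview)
Your argument is correct and is essentially the paper's own proof: normalize $\Bx^m$ by the dominant eigenvalue $\lambda^m=c_0^{2m}=\|\Bx^m\|$, use the Fourier diagonalization \eqref{eq:Bx^m} to see that the rescaled matrix converges to $\F\diag(1,0,\dots,0)\F^*=\tfrac1n J>\bfO$, and conclude entrywise positivity for all large $m$. Your write-up even fills in a couple of details (the computation $\F e_0 e_0^\top\F^*=\tfrac1n J$ and the observation $\lambda>0$) that the paper leaves implicit.
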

\begin{proof}
Let $\bfx\in\cC_n'$ and abbreviate $|c(\omega^k)|=:c_k$ for $k=0,\dots,n-1$.
Then $\|c\|_\infty=c_0>c_1,\dots,c_{n-1}\ge0$. From \eqref{eq:Bx^m} we conclude
\begin{align}
\nonumber \frac {\Bx^m}{\|\Bx^m\|}\ 
&=\ \frac 1{c_0^{2m}}\F \diag(c_0^{2m},c_1^{2m},\dots,c_{n-1}^{2m}) \F^*
\ =\ \F \diag\Bigl(1,\bigl(\frac{c_1}{c_0}\bigr)^{2m},\dots,\bigl(\frac{c_{n-1}}{c_0}\bigr)^{2m}\Bigr) \F^*\\
\label{eq:1111}
&\to\ \F \diag(1,0,\dots,0) \F^*
\ =\ \frac1n\begin{pmatrix}1&\cdots&1\\[-0.5em]\vdots&&\vdots\\[-0.2em]1&\cdots&1\end{pmatrix}
\ >\ \bfO\qquad\text{as}\qquad m\to\infty,
\end{align}
so that $\Bx^m >\bfO$ for all sufficiently large $m\in\N$.
\end{proof}
The argument in the proof of Theorem \ref{thm:necessary} does not work if
$|c(\cdot)|$ attains its maximum in another or in more than one point on $\T_n$. 
The following example shows that, indeed, $\cC_n'$ cannot be replaced by $\cC_n$ 
in Theorem~\ref{thm:necessary}.
\begin{example} \label{ex:CnvsCn'}
Take $n=5$ and
$\Cx := \F\,\diag(1,0,1,1,0)\,\F^*$.
The diagonal has its maximum in the first but also in the
3rd and 4th position, so that $\bfx\in\cC_5\setminus\cC_5'$.
The first row of $\Cx$ is
$\bfx=(\tfrac35, \alpha, \beta, \beta, \alpha)$
with
$\alpha = \tfrac15(1+2\cos(\tfrac{4\pi}5))<0$ and
$\beta = \tfrac15(1+2\cos(\frac{2\pi}5))>0$,
so that $\Cx\not\ge\bfO$ and $-\Cx\not\ge\bfO$.
But also $\Bx^m\not\ge\bfO$ since $\Cx=\CxT=\Cx^m=\Bx^m$ for all $m\in\N$.
\end{example}

So for membership in $\cC_n'$, we have the following equivalence.
\begin{corollary}
Let $n\ge2$ and $\bfx\in\R^n$. Then the following are equivalent.\\[0.1em]
\begin{tabular}{ll}
&(i)\quad $\bfx\in\cC_n'$,\\[0.1em]
&(ii)\quad $\exists m\in\N: \Bx^m>\bfO$,\\
&(iii)\quad $\exists m_0\in\N: \forall m\ge m_0: \Bx^m>\bfO$.
\end{tabular}
\end{corollary}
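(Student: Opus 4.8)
The plan is to establish the cycle of implications $(i)\Rightarrow(iii)\Rightarrow(ii)\Rightarrow(i)$, since each of the three required links is already essentially present in the material above. The proof is therefore a short bookkeeping argument assembling Theorems~\ref{thm:sufficient} and~\ref{thm:necessary}.

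First I would prove $(i)\Rightarrow(iii)$: this is exactly the content of Theorem~\ref{thm:necessary}, which guarantees an $m_0\in\N$ with $\Bx^m>\bfO$ for all $m\ge m_0$. No further work is needed here.

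Next, $(iii)\Rightarrow(ii)$ is trivial: if $\Bx^m>\bfO$ holds for all $m\ge m_0$, then in particular it holds for the single value $m=m_0$, which is the existential statement in $(ii)$. I would state this in one line.

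Finally, $(ii)\Rightarrow(i)$ is Theorem~\ref{thm:sufficient}\,b): the existence of some $m\in\N$ with $\Bx^m>\bfO$ forces $\bfx\in\cC_n'$. Chaining the three implications closes the loop and yields the equivalence of $(i)$, $(ii)$, $(iii)$. I do not anticipate any genuine obstacle, since every ingredient has been proved; the only mild care needed is to make sure the quantifier structure of $(iii)$ is matched correctly when invoking Theorem~\ref{thm:necessary} (it supplies the "for all $m\ge m_0$" form directly, not merely "for some $m$"), so that the strong conclusion $(iii)$ — and not just $(ii)$ — comes out of $(i)$.

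\begin{proof}
We show $(i)\Rightarrow(iii)\Rightarrow(ii)\Rightarrow(i)$.

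$(i)\Rightarrow(iii)$: This is precisely Theorem~\ref{thm:necessary}: if $\bfx\in\cC_n'$, then there is an $m_0\in\N$ with $\Bx^m>\bfO$ for all $m\ge m_0$.

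$(iii)\Rightarrow(ii)$: If $\Bx^m>\bfO$ for all $m\ge m_0$, then in particular $\Bx^{m_0}>\bfO$, so $(ii)$ holds.

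$(ii)\Rightarrow(i)$: If $\Bx^m>\bfO$ for some $m\in\N$, then $\bfx\in\cC_n'$ by Theorem~\ref{thm:sufficient}\,b).
\end{proof}
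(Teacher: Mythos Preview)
Your proof is correct and matches the paper's own argument essentially line for line: the paper also cites Theorem~\ref{thm:sufficient}\,b) for $(ii)\Rightarrow(i)$, Theorem~\ref{thm:necessary} for $(i)\Rightarrow(iii)$, and calls $(iii)\Rightarrow(ii)$ obvious. There is nothing to add.
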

\begin{proof}
$(ii)\Rightarrow (i)$ is Theorem \ref{thm:sufficient} b),
$(i)\Rightarrow (iii)$ is Theorem \ref{thm:necessary} b),
and $(iii)\Rightarrow (ii)$ is obvious.
\end{proof}

\section{Complex entries}
The case $\bfx\in\C^n$ is only slightly different. When we refer to $\cC_n$ or $\cC_n'$
now, we mean the corresponding subsets of $\C^n$.
In a complex version of Lemma \ref{lem:x>=0} a) it would be enough to have all entries of $\bfx$ of the same
phase, i.e.~on the same ray $\{rz:r\ge 0\}$ with some $z\in\C$. But for Lemma~\ref{lem:Bx>=0}~a),
that ray would again have to be the nonnegative real axis, because the main diagonal entries of $\Bx:=\Cx^*\Cx$
are always there. The other entries of $\Bx$ or $\Bx^m$ need not even be real, let alone nonnegative or positive. 

However, the proof of Theorem \ref{thm:necessary} shows that the entries of $\Bx^m$ are in a
certain neighborhood of the positive half axis if $\bfx\in\cC_n'$ (also for the complex version) 
and $m$ is sufficiently large. On the other hand, by the continuity of each function value $c(t)$ with respect to $\bfx$,
one can generalize Lemma \ref{lem:x>=0} to an appropriate neighborhood of the positive half axis:
\begin{lemma} \label{lem:suff_complex}
If $n\ge 2$ and $\bfx=(x_0,\dots,x_{n-1})\in\C^n$ is such that
at least two adjacent entries of $\bfx$ are nonzero and all phases
are close to zero, precisely, each
\begin{equation} \label{eq:phase0}
\ph_k:=\arg x_k\in (-\pi,\pi]
\qquad\text{is subject to}\qquad
|\ph_k|<\frac{\pi}{2n},
\end{equation}
then $\bfx\in\cC_n'$.
\end{lemma}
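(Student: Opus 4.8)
The plan is to reduce the complex case to the positivity argument in Lemma~\ref{lem:x>=0}~b) by a perturbation/rotation trick, using the hypothesis that all phases are confined to an interval of length less than $\pi/n$. First I would reuse the key estimate from the proof of Lemma~\ref{lem:x>=0}~b): for $t=\omega^j\in\T_n\setminus\{1\}$ and adjacent nonzero entries $x_k,x_{k+1}$, the two terms $x_kt^k$ and $x_{k+1}t^{k+1}$ have arguments differing by $\arg t=2\pi j/n$ (mod $2\pi$) plus the small phase difference $\ph_{k+1}-\ph_k$; since $|\ph_{k+1}-\ph_k|<\pi/n$ and $2\pi j/n\in\{2\pi/n,\dots,2\pi(n-1)/n\}$, the combined argument difference is bounded away from $0$ (mod $2\pi$), so the triangle inequality is strict: $|x_kt^k+x_{k+1}t^{k+1}|<|x_k|+|x_{k+1}|$. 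Bounding the remaining terms trivially by $|x_\ell|$ gives $|c(t)|<\sum_\ell|x_\ell|$ for every $t\in\T_n\setminus\{1\}$.

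The second step is to show $|c(1)|=\sum_\ell|x_\ell|$, or at least that $|c(1)|$ equals $\|c\|_\infty=\max_{t\in\T_n}|c(t)|$. This is where the all-phases-close-to-zero hypothesis (not merely small consecutive differences) is essential: writing $x_\ell=|x_\ell|\re^{\ri\ph_\ell}$ with $|\ph_\ell|<\pi/(2n)\le\pi/2$, all the $x_\ell$ lie in the open right half-plane, so $c(1)=\sum_\ell x_\ell$ has $\Re c(1)=\sum_\ell|x_\ell|\cos\ph_\ell>0$; more precisely, since every $\ph_\ell$ lies in a common interval of length $<\pi/n<\pi$, one can rotate by a single phase $\re^{-\ri\th}$ (e.g. $\th$ the midpoint) so that $\re^{-\ri\th}x_\ell$ all have argument in $(-\pi/(2n),\pi/(2n))$ symmetric about $0$; this does not change any modulus $|c(t)|$. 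Hence it suffices to treat the rotated vector, for which $c(1)$ is a sum of vectors in a narrow cone about the positive real axis. Combining with the strict bound $|c(t)|<\sum|x_\ell|$ from step~one and the trivial bound $|c(1)|\le\sum|x_\ell|$, I would actually argue that the relevant comparison is between $|c(1)|$ and each $|c(t)|$ directly: redo the step-one estimate but keep track that for $t=1$ the adjacent terms $x_k+x_{k+1}$ (after rotation) add nearly in phase, whereas for $t\ne1$ they partially cancel, yielding $|c(t)|<|c(1)|$ for all $t\in\T_n\setminus\{1\}$, which is exactly membership in $\cC_n'$.

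Concretely, the cleanest route is: after rotating so that $\arg x_\ell\in(-\tfrac{\pi}{2n},\tfrac{\pi}{2n})$ for all $\ell$, observe that for any $t=\omega^j\ne1$ the vector $(1,t,t^2,\dots,t^{n-1})$ spreads arguments across a full residue system mod $2\pi$ with spacing $2\pi/n$, so for the adjacent pair one has $\arg(x_kt^k)-\arg(x_{k+1}t^{k+1})\equiv -\tfrac{2\pi j}{n}+(\ph_k-\ph_{k+1})\pmod{2\pi}$, whose distance from $0$ mod $2\pi$ is at least $\tfrac{2\pi}{n}-\tfrac{\pi}{n}=\tfrac{\pi}{n}>0$; this gives a strict triangle inequality with a uniform gap. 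For $t=1$, instead, all $n$ terms $x_\ell$ lie in the cone of half-angle $<\pi/(2n)$ about $\R_{>0}$, so $|c(1)|=|\sum x_\ell|\ge\cos(\tfrac{\pi}{2n})\sum|x_\ell|$. Since $\cos(\tfrac{\pi}{2n})$ may be smaller than the right-hand side of the step-one bound, I would sharpen step one: the pair estimate actually gives $|x_kt^k+x_{k+1}t^{k+1}|\le\sqrt{|x_k|^2+|x_{k+1}|^2+2|x_k||x_{k+1}|\cos(\tfrac{\pi}{n})}$, and one checks this, plus $\sum_{\ell\ne k,k+1}|x_\ell|$, is strictly less than $\cos(\tfrac{\pi}{2n})(|x_k|+|x_{k+1}|)+\sum_{\ell\ne k,k+1}|x_\ell|\le|c(1)|\cdot(\text{something})$ — here the arithmetic must be done carefully, and in fact a slicker argument compares $|c(t)|^2$ with $|c(1)|^2$ termwise via the Fejér-kernel-type identity $|c(t)|^2=\sum_{k,\ell}x_k\overline{x_\ell}\,t^{k-\ell}$, showing each off-diagonal contribution is maximized (in real part) at $t=1$ because all $x_k\overline{x_\ell}$ have argument $\ph_k-\ph_\ell\in(-\pi/n,\pi/n)$, so $\Re(x_k\overline{x_\ell}t^{k-\ell})\le|x_k\overline{x_\ell}|\cos(\arg(x_k\overline{x_\ell}))=\Re(x_k\overline{x_\ell})$ for $t=1$, with strict inequality somewhere when $t\ne1$.

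The main obstacle is exactly this last arithmetic: making the strict inequality $|c(t)|<|c(1)|$ quantitatively airtight for \emph{every} $t\in\T_n\setminus\{1\}$ using only $|\ph_k|<\pi/(2n)$, rather than for the real nonnegative case where it is immediate. I expect the Fejér-kernel/quadratic-form viewpoint to be the right bookkeeping device: expand $|c(t)|^2=\sum_{k,\ell}x_k\overline{x_\ell}\,t^{k-\ell}$, note the diagonal $\sum|x_\ell|^2$ is independent of $t$, and show $\Re\!\big(\sum_{k\ne\ell}x_k\overline{x_\ell}\,t^{k-\ell}\big)$ is strictly largest at $t=1$; since $|x_k\overline{x_\ell}|$ has phase $\ph_k-\ph_\ell$ of modulus $<\pi/n<\pi$, and $t^{k-\ell}$ runs over nontrivial $n$th roots of unity for at least one adjacent pair $(k,\ell)=(k,k+1)$ with $x_kx_{k+1}\ne0$, one term strictly drops while none can compensate beyond its $t=1$ value. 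Once $\max_{t\ne1}|c(t)|^2<|c(1)|^2$ is established, $\bfx\in\cC_n'$ follows by definition, completing the proof.
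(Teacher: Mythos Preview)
Your final approach --- expanding $|c(t)|^2=\sum_k|x_k|^2+2\sum_{j<k}|x_j||x_k|\cos\bigl(\ph_j-\ph_k+(j-k)\arg t\bigr)$ and comparing the cosines termwise between $t=1$ and $t\ne1$ --- is exactly the paper's proof (the paper calls the expansion a ``generalized law of cosines''). The one step you leave as ``must be done carefully'' is clean: since each $\ph_j-\ph_k$ lies in $I_n:=(-\tfrac\pi n,\tfrac\pi n)$ and $\arg t\in\tfrac{2\pi}n\Z$, the shifted argument $\ph_j-\ph_k+(j-k)\arg t$ either coincides with $\ph_j-\ph_k$ modulo $2\pi$ (when $(j-k)\arg t\in2\pi\Z$) or falls outside $I_n$ modulo $2\pi$; and $\cos$ is strictly larger on $I_n$ than on its complement. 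So every off-diagonal term at $t=1$ dominates its counterpart at $t\ne1$, with strict inequality for the adjacent nonzero pair (since $(j-k)\ell=-\ell\notin n\Z$ there), giving $|c(t)|<|c(1)|$. Your earlier triangle-inequality attempts were rightly abandoned: the slack between $|c(1)|$ and $\sum_\ell|x_\ell|$ cannot be recovered that way.
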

\begin{proof}
We start with $n$ general complex numbers $z_0,\dots,z_{n-1}\in\C$ and put $\psi_k:=\arg z_k$,
which we put to zero if $z_k=0$. Then the following ``generalized law of cosines'' is easily verified.
\begin{align}
\nonumber
|z_0+\dots+z_{n-1}|^2\ &=
\ (z_0+\dots+z_{n-1})\overline{(z_0+\dots+z_{n-1})}
\ =\ \sum_{j,k=0}^{n-1}z_j\overline{z_k}\\
\label{eq:coslaw}
&=\ \sum_{j=0}^{n-1}|z_j|^2+2\kern-0.8em\sum_{\scriptsize\begin{array}{c}j,k=0\\j<k\end{array}}^{n-1}\kern-0.8em\Re(z_j\overline{z_k})
\ =\ \sum_{j=0}^{n-1}|z_j|^2+2\kern-0.8em\sum_{\scriptsize\begin{array}{c}j,k=0\\j<k\end{array}}^{n-1}\kern-0.8em|z_j||z_k|\cos(\psi_j-\psi_k)
\end{align}
Putting $z_k:=x_k$ from above, we have $\psi_k=\ph_k$ and hence
\begin{equation} \label{eq:c1}
|c(1)|^2\ =\ |x_0+\dots+x_{n-1}|^2
\ \stackrel{\eqref{eq:coslaw}}=
\ \sum_{j=0}^{n-1}|x_j|^2+2\kern-0.8em\sum_{\scriptsize\begin{array}{c}j,k=0\\j<k\end{array}}^{n-1}\kern-0.8em|x_j||x_k|\cos(\ph_j-\ph_k).
\end{equation}
Now take $t=\omega^\ell\in\T_n\setminus\{1\}$ with some $\ell\in\{1,\dots,n-1\}$ and put
$z_k:=x_kt^k$ in \eqref{eq:coslaw}. Then $\psi_k=\arg(x_kt^k)=\arg x_k+k\arg t=\ph_k+k\ell\th$ with
$\th:=\arg\omega=\tfrac{2\pi}n$. Plugging this into \eqref{eq:coslaw}, we get
\begin{equation} \label{eq:ct}
|c(t)|^2\ =\ |x_0t^0+\dots+x_{n-1}t^{n-1}|^2
\ \stackrel{\eqref{eq:coslaw}}=
\ \sum_{j=0}^{n-1}|x_j|^2+2\kern-0.8em\sum_{\scriptsize\begin{array}{c}j,k=0\\j<k\end{array}}^{n-1}\kern-0.8em|x_j||x_k|\cos\bigl(\ph_j-\ph_k+(j-k)\ell\th\bigr).
\end{equation}
By our assumption \eqref{eq:phase0}, all differences $\ph_j-\ph_k$ are in the interval 
$(-\frac\pi n,\frac\pi n)=:I_n$. Since the length of $I_n$ is $\th=\frac{2\pi}n$, 
\[
\ph_j-\ph_k+(j-k)\ell\th\quad
\ \left\{
\begin{array}{ll}
=\ph_j-\ph_k,&\text{if } (j-k)\ell\in n\Z,\\
\not\in I_n,&\text{otherwise},
\end{array}\right\}
\quad \text{both modulo }2\pi.
\]
Moreover, $\cos x<\cos y$ whenever $x\not\in I_n$ and $y\in I_n$ (modulo $2\pi$).
Consequently, all cosines in \eqref{eq:c1} are larger than or equal to the corresponding cosines in \eqref{eq:ct}.
So $|c(1)|\ge|c(t)|$.

For our two adjacent $j,k$ with $x_j$ and $x_k$ nonzero, we have $j-k=-1$ and hence
$(j-k)\ell\not\in n\Z$, so that the corresponding term in \eqref{eq:c1} is strictly larger than 
in \eqref{eq:ct}. Hence, $|c(1)|>|c(t)|$.
\end{proof}
So it is already enough for $\bfx\in\cC_n'$ that each entry of $\bfx$ is
in a certain cone around the positive real half axis. By the same arguments
as in the real case, one can look at a power of $\Bx:=\Cx^*\Cx$, which
is again a circulant matrix, and check whether the entries of its first (or any) 
row satisfy \eqref{eq:phase0}.
\begin{theorem} \label{thm:complex}
Let $n\ge 2$ and $\bfx\in\C^n$. Then the following are equivalent.\\[-2.2em]
\begin{itemize} \itemsep-1mm
\item[(i)] $\bfx\in\cC_n'$,
\item[(ii)] $\exists m\in\N:$ at least two adjacent entries of the first row of $\Bx^m$ are nonzero and satisfy \eqref{eq:phase0},
\item[(iii)] $\exists m\in\N:$ all entries of the first row of $\Bx^m$ are nonzero and satisfy \eqref{eq:phase0},
\item[(iv)] $\exists m_0\in\N: \forall m\ge m_0:$ all entries of the first row of $\Bx^m$ are nonzero and satisfy \eqref{eq:phase0}.
\end{itemize}
\end{theorem}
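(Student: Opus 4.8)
The plan is to prove the cycle $(i)\Rightarrow(iv)\Rightarrow(iii)\Rightarrow(ii)\Rightarrow(i)$, reusing three ingredients already at hand: that $\Bx^m=\F\Ax^m\F^*$ is a circulant matrix with symbol $t\mapsto b(t)^m=|c(t)|^{2m}$; the limit identity \eqref{eq:1111}, whose proof goes through verbatim for complex $\bfx$ (and which, as already noted in the text, puts the entries of $\Bx^m$ near the positive real half axis); and the complex sufficient condition of Lemma~\ref{lem:suff_complex}. For the first three implications let $m$ be the integer supplied by the relevant hypothesis and write $\mathbf y=(y_0,\dots,y_{n-1})$ for the first row of $\Bx^m$, so that $\Bx^m=\mathbf C_{\mathbf y}$.

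The two ``downward'' implications are immediate: $(iv)\Rightarrow(iii)$ by taking $m=m_0$, and $(iii)\Rightarrow(ii)$ because, as $n\ge2$, once every entry of $\mathbf y$ is nonzero any adjacent pair already consists of two nonzero entries, while all entries satisfy \eqref{eq:phase0} by hypothesis. The implication $(ii)\Rightarrow(i)$ is where this section's main tool, Lemma~\ref{lem:suff_complex}, is applied: its hypotheses hold for $\mathbf y$ by assumption, so $\mathbf y\in\cC_n'$; unwinding the definition of $\cC_n'$ together with the symbol identity $b^m=|c|^{2m}$, membership $\mathbf y\in\cC_n'$ says exactly that $\max_{t\in\T_n\setminus\{1\}}|c(t)|^{2m}<|c(1)|^{2m}$, which, all the $|c(t)|$ being nonnegative, is equivalent to $\max_{t\in\T_n\setminus\{1\}}|c(t)|<|c(1)|=\|c\|_\infty$, i.e.\ $\bfx\in\cC_n'$.

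The substantive step is $(i)\Rightarrow(iv)$. If $\bfx\in\cC_n'$ then $|c(1)|=\|c\|_\infty>0$, so $\bfx\ne\bfO$ and $\|\Bx^m\|=\|c\|_\infty^{2m}>0$ for every $m$, which legitimizes the normalization in \eqref{eq:1111}. By \eqref{eq:1111}, the first row of $\Bx^m/\|\Bx^m\|$ converges entrywise, as $m\to\infty$, to the constant vector $(\tfrac1n,\dots,\tfrac1n)$. Since $\|\Bx^m\|$ is a positive real scalar, dividing an entry by it changes neither its phase nor whether it vanishes, so it suffices to trap the entries of the normalized first row in a good set. Take the open disk $D(\tfrac1n,\varrho)$ with $\varrho:=\tfrac1{2n}\sin\tfrac{\pi}{2n}$: the distance from $\tfrac1n$ to either ray bounding the open cone $\{z\in\C\setminus\{0\}:|\arg z|<\tfrac{\pi}{2n}\}$ is $\tfrac1n\sin\tfrac{\pi}{2n}>\varrho$, and $\varrho<\tfrac1n$, so (the cone being convex, its opening angle $\tfrac{\pi}{n}$ being $<\pi$) the disk $D(\tfrac1n,\varrho)$ lies inside that cone. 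Hence there is an $m_0$ such that for all $m\ge m_0$ every entry of the first row of $\Bx^m/\|\Bx^m\|$, and therefore every entry $y_k$ of the first row of $\Bx^m$ itself, is nonzero with $|\arg y_k|<\tfrac{\pi}{2n}$, i.e.\ satisfies \eqref{eq:phase0}. This is $(iv)$.

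I expect the only genuine obstacle to be this last geometric passage — converting ``$\Bx^m/\|\Bx^m\|$ is near the all-$\tfrac1n$ matrix'' into the quantitative phase bound \eqref{eq:phase0} — and it is dispatched by the elementary inscribed-radius estimate above. Everything else is bookkeeping around the symbol identity $b^m=|c|^{2m}$ and the definition of $\cC_n'$.
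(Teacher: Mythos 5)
Your proof is correct and follows essentially the same route as the paper: $(iv)\Rightarrow(iii)\Rightarrow(ii)$ as bookkeeping, $(ii)\Rightarrow(i)$ via Lemma~\ref{lem:suff_complex} applied to the first row of $\Bx^m$ together with the symbol identity $b^m=|c|^{2m}$, and $(i)\Rightarrow(iv)$ from the limit \eqref{eq:1111}. The only cosmetic difference is in the last step, where the paper deduces $\arg b_{jk}^{(m)}\to0$ directly from $b_{jk}^{(m)}/\|\Bx^m\|\to\tfrac1n$, while you make the same conclusion quantitative with the disk-inside-cone estimate; both are fine.
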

\begin{proof}
The implications $(iv)\Rightarrow(iii)\Rightarrow(ii)$ are obvious. It remains to check $(ii)\Rightarrow(i)\Rightarrow(iv)$.

$(ii)\Rightarrow(i)$: Let $m\in\N$ be as in $(ii)$ and denote the circulant matrix $\Bx^m$ by $\Cy$.
By Lemma \ref{lem:suff_complex}, $\bfy\in\cC_n'$, i.e.~the symbol $b$ of $\Bx^m$
has its maximum at $1$ and only there. Arguing as in the proofs of Lemma \ref{lem:Bx>=0} 
and Theorem \ref{thm:sufficient}, the same holds for the symbol $c$ of $\Cx$, so that $\bfx\in\cC_n'$.

$(i)\Rightarrow(iv)$: Let $\bfx\in\cC_n'$. Following the proof of Theorem \ref{thm:necessary}
up to \eqref{eq:1111}, we see that, for all entries of $\Bx^m$, let us denote them by  $b_{jk}^{(m)}$,
we have the following limits as $m\to\infty$,
\[
\frac{b_{jk}^{(m)}}{\|\Bx^m\|}\to\frac 1n,
\quad\text{so that}\quad
\frac{|b_{jk}^{(m)}|}{\|\Bx^m\|}\to\left|\frac 1n\right|=\frac 1n
\quad\text{and hence}\quad
\frac{b_{jk}^{(m)}}{|b_{jk}^{(m)}|}=\frac{b_{jk}^{(m)}}{\|\Bx^m\|}\frac{\|\Bx^m\|}{|b_{jk}^{(m)}|}\to\frac 1n\cdot n=1,
\]
showing that $\arg b_{jk}^{(m)}\to 0$. It follows that, for all sufficiently large $m$, all entries of $\Bx^m$
are nonzero and subject to \eqref{eq:phase0}. This clearly implies $(iv)$. 
\end{proof}

\section{Conclusion}
Theorems \ref{thm:sufficient} and \ref{thm:necessary} are clearly not meant to give
efficient ways of computing the spectral norm of a generic real circulant matrix -- 
one cannot beat formula \eqref{eq:diag} in terms of the computational cost.
Rather than that, our theorems connect two apparently different questions to each other:\\
(i) whether $\|\Cx\|$ equals $|x_0+\dots+x_{n-1}|$, and (ii) eventual positivity
of the semigroup $(\Bx^m)_{m=0}^\infty$. 

In the complex case, one has the same results but instead of being real and positive,
the matrix entries of $\Bx^m$ only have to belong to a certain cone \eqref{eq:phase0}
around the positive half axis.


\begin{thebibliography}{99}

\bibitem{Davis:Circ}
{\sc P.~J.~Davis}: {\it Circulant Matrices}, Wiley, 1979.

\bibitem{4Fins1}
{\sc J.K.~Merikoski, P.~Haukkanen, M.~Mattila} and {\sc T.~Tossavainen}:
The spectral norm of a Horadam circulant matrix,
{\it to appear in JP Journal of Algebra, Number Theory and Applications,} 
see {\tt \href{https://arxiv.org/abs/1705.03494v1}{arXiv:1705.03494v1}}

\bibitem{4Fins2}
{\sc J.K.~Merikoski, P.~Haukkanen, M.~Mattila} and {\sc T.~Tossavainen}:
On the spectral and Frobenius norm of a generalized Fibonacci $r$-circulant matrix,
{\it Special Matrices} {\bf 6} (2018), 23--36.
\end{thebibliography}
\end{document}